\numberwithin{equation}{section}
\newtheorem{theorem}{\bf Theorem}[section]
\newtheorem{lemma}[theorem]{\bf Lemma}
\newtheorem{proposition}[theorem]{\bf Proposition}
\theoremstyle{definition}
\newtheorem{definition}[theorem]{\bf Definition}
\numberwithin{equation}{section}
\let\c@theorem\c@equation
\newtheorem*{namedtheorem}{\theoremname}
\newcommand{\theoremname}{testing}
\newcommand{\gen}[1]{\langle #1 \rangle}
\newcommand{\widebar}[1]
      {\overset{{\mskip6mu\leaders\hrule height0.4pt\hfill\mskip3mu}}{#1}
      \vphantom{#1}}
\newcommand{\ol}{\widebar}
\newcommand{\CC}{\mathbb{C}}
\newcommand{\FF}{\mathbb{F}}
\newcommand{\ZZ}{\mathbb{Z}}
\newcommand{\B}{\mathcal{B}}
\newcommand{\C}{\mathcal{C}}
\newcommand{\F}{\mathcal{F}}
\renewcommand{\L}{\mathcal{L}}
\renewcommand{\O}{\mathcal{O}}
\newcommand{\Q}{\mathcal{Q}}
\newcommand{\X}{\mathcal{X}}
\newcommand{\Z}{\mathcal{Z}}
\renewcommand{\phi}{\varphi}
\newcommand{\Hom}{\operatorname{Hom}}
\newcommand{\Aut}{\operatorname{Aut}}
\newcommand{\Out}{\operatorname{Out}}
\newcommand{\Inn}{\operatorname{Inn}}
\newcommand{\op}{\operatorname{op}}
\newcommand{\xra}{\xrightarrow}
\renewcommand{\mod}{\text{-}\textsf{mod}}
\newcommand{\bydef}{\overset{\mathrm{def}}{=}}
\newcommand{\Mack}{\operatorname{Mack}}
\renewcommand{\t}{\mathrm{t}}
\renewcommand{\r}{\mathrm{r}}
\newcommand{\tr}{\mathrm{tr}}
\newcommand{\iso}{\mathrm{iso}}
\newcommand{\hocolim}{\operatorname{hocolim}}
\newcommand{\Top}{\mathsf{Top}}
\newcommand{\hoTop}{\mathsf{hoTop}}
\newcommand{\longright}[1]{\;{\count255=0 \loop \relbar\mathrel{\mkern-6mu}%
    \advance\count255 by1\ifnum\count255<#1\repeat\rightarrow}\;}
\begin{document}
\title[Cohomology on the centric orbit category]{Cohomology on the centric orbit category of a fusion system}
\author{George Glauberman}
\address{Department of Mathematics \\ University of Chicago \\ 5734 S.
University Ave \\ Chicago, IL 60637}
\email{gglauber@uchicago.edu}
\author{Justin Lynd}
\address{Department of Mathematics \\ University of Louisiana at Lafayette \\ Lafayette, LA 70504}
\email{lynd@louisiana.edu}
\subjclass[2020]{Primary 55R35, 20E25, Secondary 20J06, 20D20, 55R40}
\date{\today}
\thanks{The first author was partially supported by a Simons Foundation
Collaboration Grant. The second author was partially supported by NSF Grant
DMS-1902152. The authors thank these organizations for their support. The
authors would like to thank the Isaac Newton Institute for Mathematical
Sciences, Cambridge, for support and hospitality during the program Groups,
representations and applications, where work on this paper was undertaken and
supported by EPSRC grant no EP/R014604/1.}

\begin{abstract}
We study here the higher derived limits of mod $p$ cohomology on the centric
orbit category of a saturated fusion system on a finite $p$-group. It is an
open problem whether all such higher limits vanish.  This is known in many
cases, including for fusion systems realized by a finite group and for many
classes of fusion systems which are not so realized.  We prove that the higher
limits of $H^j$ vanish provided $j \leq p-2$, by showing that
the same is true for the contravariant part of a simple Mackey composition
factor of $H^j$ under the same conditions. 
\end{abstract}

\date{\today}

\maketitle

\section{Introduction} 

The purpose of this note is to show that $H^j(-,\FF_p)$, considered as a
contravariant functor on the centric orbit category of a saturated fusion
system on a $p$-group, has vanishing higher derived limits provided $j \leq p-2$.

\begin{theorem}
\label{T:main}
Fix a prime $p$ and a nonnegative integer $j \leq p-2$.
For each saturated fusion system $\F$ on a finite $p$-group,
\[
{\lim}^i H^j(-, \FF_p)|_{\O(\F^c)} = 0 
\]
for all $i \geq 1$. 
\end{theorem}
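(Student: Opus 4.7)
The plan is to exploit the Mackey functor structure carried by $H^j(-,\FF_p)$ and, as the abstract suggests, reduce the vanishing to a statement about the contravariant parts of simple Mackey composition factors.

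First, I would observe that $H^j(-,\FF_p)$ is naturally a Mackey functor on $\F^c$: the restriction maps come from cohomological restriction along $\F$-morphisms, and the transfer maps from the classical cohomological transfer, which is defined because every $\F$-morphism is an injective homomorphism of finite $p$-groups. Since $H^j(P,\FF_p)$ is finite-dimensional for each $\F$-centric $P$, the Mackey functor $H^j$ admits a finite composition series in $\Mack(\F^c)$. Passage to the contravariant part on $\O(\F^c)$ is exact, so the long exact sequences of derived limits reduce the theorem to showing that $\lim^i \r(M)|_{\O(\F^c)}=0$ for every simple Mackey composition factor $M$ of $H^j(-,\FF_p)$ and every $i\geq 1$.

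Next, I would invoke a classification of simple Mackey functors on $\F^c$ in the spirit of Th\'evenaz--Webb: every simple $M$ is parametrized (up to isomorphism) by a pair $(Q,V)$, where $Q$ is an $\F$-centric subgroup up to $\F$-conjugacy and $V$ is a simple $\FF_p[\Out_\F(Q)]$-module. The contravariant part $\r(M)$ is then supported on the $\F$-conjugacy class of $Q$ with value $V$ at $Q$, and the higher limits $\lim^i \r(M)|_{\O(\F^c)}$ can be assembled from the group cohomology $H^i(\Out_\F(Q);V)$ together with contributions from proper over-groups of $Q$ in $\F$.

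Finally, I would use the hypothesis $j\leq p-2$ to constrain which $(Q,V)$ can arise as Mackey composition factors of $H^j(-,\FF_p)$ and to force the relevant higher cohomology to vanish. For $j$ in this range, the simple $\FF_p[\Out_\F(Q)]$-subquotients of $H^j(Q,\FF_p)$ are controlled enough (via a low-degree analysis using Lyndon--Hochschild--Serre for $Q$ and standard modular representation-theoretic bounds) that $H^i(\Out_\F(Q);V)=0$ for all $i\geq 1$. The main obstacle is precisely this step: extracting a genuinely low-degree cohomological vanishing statement from the hypothesis $V\hookrightarrow H^j(Q,\FF_p)$ with $j\leq p-2$. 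The delicacy is that $\Out_S(Q)$ is a nontrivial Sylow $p$-subgroup of $\Out_\F(Q)$ in general, so the vanishing cannot come from projectivity of $V$; it must reflect the tight interaction between the bound $j\leq p-2$ and the representation theory of $\Out_\F(Q)$ on low-degree group cohomology.
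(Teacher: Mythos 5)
Your proposal contains several genuine gaps, and the final step you identify as ``the main obstacle'' is in fact not bridgeable by the route you describe.

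First, the classification of simple Mackey functors used in the paper (following D\'iaz--Park, in the spirit of Th\'evenaz--Webb) parametrizes simples by pairs $(T,V)$ where $T$ ranges over \emph{all} subgroups of $S$ up to $\F$-conjugacy, not just the $\F$-centric ones. The paper works with $H^j$ as a Mackey functor on the full orbit category $\O(\F)$, and the crucial reduction (Lemma~\ref{L:DP43}, due to D\'iaz--Park) is that one only needs to handle the composition factors $S_{T,V}$ with $T$ \emph{not} $\F$-centric: for $T$ centric the restriction of $S_{T,V}$ to $\O(\F^c)$ is already known to be acyclic. Your restriction to centric $Q$ throws away exactly the cases that carry the content.

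Second, $S_{T,V}$ is not atomic: it is nonzero on every subgroup containing an $\F$-conjugate of $T$, with values $S_{T,V}(Q)$ built from induced modules as in \eqref{STVQ}. To get atomic pieces one must first prune $S_{T,V}|_{\O(\F^c)}$ to subquotients $(S_{T,V})_Q$ concentrated on a single conjugacy class, and then the relevant invariant is not ordinary group cohomology $H^i(\Out_\F(Q);-)$ but the Jackowski--McClure--Oliver functors $\Lambda^i(\Out_\F(Q), S_{T,V}(Q))$ of Proposition~\ref{P:lim-atomic}. These behave very differently from $H^i$; in particular, vanishing of $\Lambda^i$ is detected by a trace condition along radical $p$-chains (Lemma~\ref{L:AKO5.27}), not by projectivity. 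Your own closing remark correctly observes that projectivity is unavailable, but the conclusion to draw from that is that $H^i(\Out_\F(Q);V)=0$ is false in general, not that it must hold for a subtle reason.

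Third, and most substantively, you never produce the mechanism that converts $j \le p-2$ into vanishing. The paper's engine is concrete: take a large normal abelian subgroup $B \trianglelefteq S$ with $C_S(B)\le B$ and $[S,B,B]=1$; observe that the resulting quadratic action of $B\cap Q$ on each centric $Q$ gives, via Miyamoto's lemma (Lemma~\ref{L:miyamoto}), nilpotence degree at most $j+1$ of the relevant elements acting on $H^j$ and on the modules $S_{T,V}(Q)_\alpha$; then the trace-vanishing Lemma~\ref{L:tracebasic} kills the traces appearing in the radical-chain criterion exactly when $j+1 < p$, i.e.\ $j\le p-2$. Nothing in your proposal plays the role of this combination of quadratic action, Miyamoto, and the trace criterion, so the key implication $j\le p-2 \Rightarrow$ vanishing is left entirely unexplained.
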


Here $\F^c$ denotes the full subcategory of $\F$ with objects the $\F$-centric
subgroups, and $\O(\F^c)$ denotes the corresponding orbit category.  For
example, the special case $j = 1$ of Theorem~\ref{T:main} states that the
functor $H^1(-,\FF_p)\colon \O(\F^c)^{\op} \to \FF_p\mod$ is acyclic when $p$ is odd.
This was a case that motivated our work for reasons we outline below.  But
first we give context for this result, referring to
\cite[Section~2]{BrotoLeviOliver2003} and
\cite[Section~III.5.6]{AschbacherKessarOliver2011} for further details. 

One way to study the classifying space of a finite group $G$ at a prime $p$ (or
of a compact Lie group, or of a saturated fusion system over a $p$-group) is to
recognize the space as glued from classifying spaces of collections of proper
subgroups. Following Dwyer's uniform approach to such homotopy decompositions
in the 1990s \cite{Dwyer1997}, this comes in the form of a map
\[
\hocolim_\C F \to BG
\]
inducing an isomorphism in mod $p$ cohomology, where $\C$ is a small category
and $F \colon \C \to \Top$ is some functor such that for each $c \in \C$, the
composite $F(c) \to BG$ identifies $F(c)$ with the classifying space of a
subgroup of $G$ up to homotopy. The decomposition most relevant here is the
\emph{subgroup decomposition for the centric collection} \cite{Dwyer1997}.  In
this case, $\C = \O_p^c(G)$ is the full subcategory of the orbit category of
$G$ on the $p$-centric subgroups, and $F(P) = \widetilde{B}P$ is an appropriate
lifting to $\Top$ of the classifying space functor $B\colon \O_{p}^c(G) \to
\hoTop$ to the homotopy category. For example, one can use $\widetilde{B}P = EG
\times_G G/P$, the Borel construction applied to the orbit $G/P$ considered as
a $G$-space \cite[Section 5.2]{BensonSmith2008}. 

On the other hand, if $\F$ is a saturated fusion system on a $p$-group
$S$, there may be no finite group with Sylow $p$-subgroup $S$
realizing the fusion in $\F$ (i.e., $\F$ may be exotic).  Consequently there is
no longer an obvious lifting of the classifying space functor. As was noticed
by Broto, Levi, and Oliver \cite[\S 2]{BrotoLeviOliver2003}, the Dwyer-Kan
obstructions to such a lifting \cite{DwyerKan1992} are the same as the
obstructions to the existence and uniqueness of a centric linking system $\L$
associated with $\F$. By a theorem of Chermak, these obstructions vanish
\cite{Chermak2013}, and the $p$-completion $|\L|_p^\wedge$ is then regarded as
a classifying space ``$B\F$'' for the fusion system. The subgroup decomposition
in this context takes the form of a mod $p$ cohomology isomorphism
$\hocolim_{\O(\F^c)} \widetilde{B} \to |\L|$, where $\widetilde{B}$ is the left
homotopy Kan extension along the quotient functor $\tilde{\pi} \colon \L \to
\O(\F^c)$ of the constant functor $\L \to *$
\cite[Proposition~III.5.29]{AschbacherKessarOliver2011}. 

The most immediate application of the subgroup decomposition for a saturated
fusion system is to the computation of cohomology of the linking system. The
mod $p$ cohomology $H^{*}(|\L|,\FF_p)$ of the linking system is the abutment
of the Bousfield-Kan spectral sequence for the homotopy colimit with $E_2$
page given by $E_2^{i,j} = {\lim}^i_{\O(\F^c)} H^j(-,\FF_p)$. Note that by a
general property of $p$-local functors on orbit categories, this page is
bounded to the right \cite[Corollary~3.4]{BrotoLeviOliver2003}.
It is an open problem whether the functor $H^j(-,\FF_p)$ is acyclic,
i.e., whether ${\lim}^i_{\O(\F^c)} H^j(-,\FF_p) = 0$ for all $i \geq 1$. For
example, see \cite[Problem~7.12]{AschbacherOliver2016} and
\cite[Conjecture]{DiazPark2015}.  Dwyer called a homology decomposition with
this property ``sharp''. Sharpness would immediately yield the most natural
application of the subgroup decomposition, which is that the cohomology of the
linking system satisfies a Cartan-Eilenberg stable elements formula: 
\[
H^{j}(|\L|,\FF_p) \cong \lim_{P \in \O(\F^c)} H^j(P,\FF_p).  
\] 
The stable elements formula was already shown by Broto, Levi, and Oliver
\cite[Theorem~5.8]{BrotoLeviOliver2003} (along with other important
consequences for combinatorial descriptions mapping spaces), but the proof
relies on difficult results from homotopy theory. It would be very nice to see
the stable elements formula directly via sharpness of the subgroup
decomposition.

Sharpness over the centric $p$-orbit category of a finite group $G$ was shown
by Dwyer \cite[\S 10]{Dwyer1998}, and over the centric orbit category of
$\F_S(G)$ by D{\'i}az and Park \cite[Theorem~B]{DiazPark2015}.  Sharpness has
since been established for certain families of exotic fusion systems, notably:
those on $p$-groups with an abelian subgroup of index $p$ \cite{DiazPark2015},
the smallest Benson-Solomon fusion system \cite{HenkeLibmanLynd2023}, all
fusion systems of characteristic $p$-type/local characteristic $p$ (in the
sense of the CFSG) \cite{HenkeLibmanLynd2023}, and for the 27 exotic fusion
systems on the Sylow $p$-subgroup of $G_2(p)$ \cite{GrazianMarmo}.  The
sharpness problem has been studied from a very general point of view also by
Yal\c{c}{\i}n \cite{Yalcin2022}, who showed that sharpness for the subgroup and
the normalizer decompositions are equivalent. 

We follow D{\'i}az and Park \cite{DiazPark2015} by regarding cohomology as a Mackey
functor for the fusion system, and we study the simple Mackey functors
$S_{T,V}$ occurring as composition factors of $H^j$. Theorem~\ref{T:main} is
ultimately deduced from the following stronger result. We write $k = \FF_p$ for
short. 

\begin{theorem}
\label{STVacyc}
Fix a saturated fusion system $\F$ on a $p$-group $S$, a subgroup $T \leq S$,
and a simple $k\Out_\F(T)$-module $V$. If the simple Mackey functor $S_{T,V}$
is a composition factor of $H^j(-,k)$ and $j \leq p-2$, then the restriction
${S_{T,V}}^*|_{\O(\F^c)}$ of the contravariant part of $S_{T,V}$ has vanishing
higher derived limits.  
\end{theorem}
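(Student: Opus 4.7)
The plan is to reduce the computation of ${\lim}^i(S_{T,V})^*|_{\O(\F^c)}$ to a cohomological statement about $\Out_\F(T)$ with coefficients in $V$, and then deploy the hypothesis $j \leq p-2$ against that statement.

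First I would handle a trivial case: if no $\F$-conjugate of $T$ is $\F$-centric, then $(S_{T,V})^*|_{\O(\F^c)}$ is identically zero and there is nothing to prove. Assume henceforth that $T$ itself is $\F$-centric. Using the standard structure theory of simple Mackey functors, the contravariant part $(S_{T,V})^*$ vanishes outside the set of subgroups $\F$-subconjugate to $T$ and satisfies $(S_{T,V})^*(T) = V$ as an $\Out_\F(T)$-module; in particular, the restriction to $\O(\F^c)$ is supported on the single $\F$-conjugacy class $[T]$, making it atomic on the EI-category $\O(\F^c)$.

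For an atomic functor on an EI-category, concentrated at a single isomorphism class with value $V$, there is a classical identification of higher limits
\[
{\lim}^{\,i}_{\O(\F^c)}\, (S_{T,V})^* \;\cong\; H^i(\Out_\F(T);\, V),
\]
arising from a suitable projective resolution of the constant functor $k$ on $\O(\F^c)$ together with the fact that $\Out_\F(T)$ is the automorphism group of $[T]$ in $\O(\F^c)$. This reduces Theorem \ref{STVacyc} to showing that $H^i(\Out_\F(T); V) = 0$ for all $i \geq 1$.

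The Mackey-composition-factor hypothesis, evaluated at $T$, says that $V$ occurs as a composition factor of $H^j(T; k)$ as a $k\Out_\F(T)$-module. The main obstacle is then the group-cohomological statement: for each composition factor $V$ of $H^j(T; k)$ with $j \leq p - 2$, the cohomology $H^i(\Out_\F(T); V)$ vanishes for all $i \geq 1$. I would attack this by exploiting the tame structure of $H^j(T; k)$ in low degrees: in the range $j \leq p-2$ the mod-$p$ cohomology of a $p$-group is built from $H^1$ and $H^2$ via the Bockstein and cup products, with no higher Steenrod operations in play, so $H^j(T; k)$ carries a concrete $\Out_\F(T)$-equivariant filtration whose subquotients are controlled by the elementary abelian quotient $T/\Phi(T)$. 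Showing that each simple subquotient $V$ in this range is relatively projective for a subgroup of $\Out_\F(T)$ of order prime to $p$ would then force the desired vanishing via a transfer argument. Verifying this relative projectivity is the technical heart of the proof and is where I expect the bulk of the work to lie.
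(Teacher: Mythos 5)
Your proposal collapses at the second step, and the error is structural rather than technical. You assert that $(S_{T,V})^*$ ``vanishes outside the set of subgroups $\F$-subconjugate to $T$'' and hence, once you assume $T$ is $\F$-centric, that the restriction to $\O(\F^c)$ is atomic on the class $[T]$. The support of a simple Mackey functor runs the opposite way: $S_{T,V}(Q)=0$ for those $Q$ such that $T$ is \emph{not} $\F$-conjugate to a subgroup of $Q$; that is, $S_{T,V}$ is supported on the subgroups \emph{containing} a conjugate of $T$, not those contained in one. This is stated explicitly in the paper's review of the D{\'i}az--Park parametrization and is the standard behavior for simple Mackey functors. Consequently $(S_{T,V})^*|_{\O(\F^c)}$ is generally nonzero on an entire up-set of $\F$-centric subgroups and is not atomic, so the whole reduction you propose does not exist. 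Notice also that the paper's actual argument runs in the opposite direction to your WLOG: via \cref{L:DP43} the proof reduces to the case where $T$ is \emph{not} $\F$-centric, and then one must control $S_{T,V}(Q)$ for \emph{every} $\F$-centric $Q$, filtering by atomic subquotients indexed by all centric conjugacy classes.

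There is a second, independent error in the claimed identification of higher limits of an atomic functor with ordinary group cohomology $H^i(\Out_\F(T);V)$. For an atomic functor concentrated on $[Q]$ with value $M$, the higher limits over $\O(\F^c)$ are $\Lambda^*(\Out_\F(Q),M)$ (\cref{P:lim-atomic}), and $\Lambda^*$ is not group cohomology. For instance, $\Lambda^m(G,M)=0$ for all $m\geq 1$ whenever $O_p(G)\neq 1$, which is false for $H^m(G,-)$; and the paper's entire vanishing mechanism (\cref{L:AKO5.27}) is stated in terms of radical $p$-chains and relative traces, which is specific to $\Lambda^*$. Replacing $\Lambda^*$ by $H^*$ both misstates the target and discards the tools (Miyamoto's nilpotence bound feeding into a trace-vanishing criterion) that the hypothesis $j\leq p-2$ is designed to unlock. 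The closing paragraph of your proposal, sketching a relative-projectivity argument for composition factors of $H^j(T;k)$, does not engage with either $\Lambda^*$ or the decomposition of $S_{T,V}(Q)$ into the summands $S_{T,V}(Q)_\alpha$ indexed by $\Aut_\F(Q)$-classes of embeddings of $T$, which is where the real work occurs in \cref{P:MP=0} and \cref{P:pruning}. As written, the proposal would need to be discarded and restarted from the correct support and the correct cohomology theory.
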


Theorem~\ref{STVacyc} is amenable to the standard technique of ``pruning'',
where one filters ${S_{T,V}}^*$ by subquotient functors (not themselves
contravariant parts of Mackey functors) that take the value $0$ except on a
single $\F$-conjugacy class of subgroups of $S$.  Thus, our proof of
Theorem~\ref{T:main} goes by filtering $H^j$ first as a Mackey functor
completely, and then second as a coefficient system.

One reason for our looking at this problem grew out of a loose analogy with the
paper \cite{GlaubermanLynd2016}, where we studied higher limits of the center
functor $\Z_\F\colon \O(\F^c)^{\op} \to \ZZ_{(p)}$\mod, $P \mapsto Z(P)$ in the
context of Oliver's proof \cite{Oliver2013} of Chermak's Theorem
\cite{Chermak2013} on centric linking systems. That proof proceeds by a
reduction to the case where $\F$ is realizable by a finite $p$-constrained
group $\Gamma$ with normal centric $p$-subgroup $Q$. The observation of
\cite{GlaubermanLynd2016} was the relevance of finding a $p$-local subgroup $H$
of $\Gamma$ that controls fixed points on $Z(Q)$, i.e.  that satisfies
$C_{Z(Q)}(H) = C_{Z(Q)}(\Gamma)$, a problem that had been studied by the first
author under the guise of ``control of weak closure of elements''.  This
motivated us to look at whether techniques for ``controlling transfer'' in
finite groups could be useful in studying the higher limits of the functor
$H^1(-,\CC^{\times})$ and its subfunctor $H^1(-,\FF_p)$. The issue is that it
appears difficult to get reductions similar to those for the center functor in
order for these techniques to be applicable. Also, general techniques for
controlling transfer are known only when $p \geq 5$, so in the end what
Theorem~\ref{T:main} gives in the case $j = 1$ is stronger than what those
methods seemingly would have yielded even if they had been applicable.

We would like to thank Antonio D{\'i}az for corrections and helpful suggestions
on a previous version of this article.

\section{Background results}

\subsection{Nilpotent action on group cohomology}

Let $k$ be a commutative ring with identity.  If $G$ is a finite group, $H$ is
a subgroup of $G$, and $M$ is a $kG$-module, then we use the usual notation
$\tr_H^G\colon M^H \to M^G$ for fixed points and the relative trace map. If $p$
is a rational prime which is zero in $k$, then the relative trace is zero in
cases where there is an element of $G$ outside $H$ but normalizing $H$ and
acting with small nilpotence degree on $M$. We state this when $k = \FF_p$, the
only case we need. 
\begin{lemma}
\label{L:tracebasic}
Let $G$ be a finite group, $p$ a prime, and $V$ an $\FF_p[G]$-module.  Suppose
$g$ is an element of $G$ of $p$-power order such that $(g-1)^{p-1}V = 0$.  
Then $\tr_H^G(V) = 0$ for every subgroup $H$ of $G$ with $g \in N_G(H)-H$. 
\end{lemma}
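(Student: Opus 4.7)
The plan is to apply transitivity of transfer, $\tr_H^G = \tr_K^G \circ \tr_H^K$, for a suitable intermediate subgroup $H \leq K \leq G$ with $g \in K$, and to show that $\tr_H^K$ already vanishes on $V^H$. The natural choice is $K = \langle H, g_0 \rangle$ for some $g_0 \in \langle g \rangle \setminus H$ whose image in $N_G(H)/H$ has order exactly $p$; then $K/H$ is cyclic of order $p$ and the characteristic-$p$ congruences on binomial coefficients are sharpest.

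To secure such a $g_0$, I would first observe that the hypothesis $(g-1)^{p-1}V = 0$ forces $g^p$ to act as the identity on $V$, since $g^p = (1+(g-1))^p$ and $\binom{p}{k} \equiv 0 \pmod p$ for $1 \leq k \leq p-1$. Decomposing $g = g_p g_{p'}$ into commuting $p$- and $p'$-parts, the $p'$-part $g_{p'}$ therefore acts trivially on $V$, so $g$ and $g_p$ act identically on $V$. Replace $g$ by $g_p$ (this is $g$ itself when $g$ is a $p$-element, as is typical in the applications), and then by $g_p^{p^{a-1}}$, where $p^a$ is the order of $g_p H$ in $N_G(H)/H$. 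The identity $(g_p^m - 1) = (g_p-1)\sum_{i=0}^{m-1}g_p^i$ shows that the nilpotence hypothesis is preserved under each such replacement.

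Now with $\{1, g_0, g_0^2, \ldots, g_0^{p-1}\}$ a transversal for $H$ in $K = \langle H, g_0\rangle$, for $v \in V^H$ one expands $g_0^i = \sum_{j=0}^i \binom{i}{j}(g_0-1)^j$, swaps the order of summation, and applies the hockey-stick identity $\sum_{i=j}^{p-1}\binom{i}{j} = \binom{p}{j+1}$ to obtain
\[
\tr_H^K(v) = \sum_{i=0}^{p-1} g_0^i v = \sum_{j=0}^{p-1} \binom{p}{j+1}\,(g_0-1)^j v.
\]
The $j=p-1$ term vanishes by the nilpotence hypothesis, and every other coefficient $\binom{p}{k}$ with $1 \leq k \leq p-1$ is divisible by $p$ and hence zero in $\FF_p$. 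So $\tr_H^K(v)=0$, and transitivity of transfer finishes the argument. The main obstacle is the preparatory reduction to the index-$p$ situation: the binomial identity cleanly kills every nontrivial coefficient only when $p \mid [K:H]$, and so one must extract an element of $\langle g \rangle$ outside $H$ whose $p$-part is still outside $H$---a point on which the argument leans implicitly via the structure of $\langle g \rangle$.
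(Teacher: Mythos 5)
Your proof takes essentially the same route as the paper's: factor $\tr_H^G$ through a subgroup of index $p$ over $H$, and invoke the $\FF_p$-identity $1+g+\cdots+g^{p-1}=(g-1)^{p-1}$. The paper uses this identity directly after choosing the intermediate group $H\gen{g^p}\leq H\gen{g}$, for which $\{1,g,\ldots,g^{p-1}\}$ is a transversal; you rederive the identity by expanding $g_0^i$ binomially and summing with the hockey-stick identity, which is longer but has the same content (indeed $\sum_{i=0}^{p-1}x^i=\sum_{j=1}^{p}\binom{p}{j}(x-1)^{j-1}$ reduces mod $p$ to $(x-1)^{p-1}$). Your preparatory extraction of the $p$-part of $g$ is sound, and your check that $(g^m-1)^{p-1}$ still annihilates $V$ is correct, but the extraction is not needed: the paper's factorization $\tr_H^G=\tr_{H\gen{g}}^G\,\tr_{H\gen{g^p}}^{H\gen{g}}\,\tr_H^{H\gen{g^p}}$ already lands in the index-$p$ situation without splitting $g$ into prime parts.

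The caveat you raise at the end is a genuine one, and in fact it is a soft spot shared by the paper's own proof. If the image of $g$ in $N_G(H)/H$ has order coprime to $p$ --- equivalently if $g\in H\gen{g^p}$, or $g_p\in H$ in your notation --- then there is no index-$p$ step available, and the lemma as literally stated fails: take $V$ the trivial module, $H=1$, and $G=\gen{g}$ cyclic of order coprime to $p$; then $(g-1)V=0$ but $\tr_1^G(V)=|G|\cdot V=V\neq 0$. The paper's computation of $\tr_{H\gen{g^p}}^{H\gen{g}}$ tacitly assumes $\{1,g,\ldots,g^{p-1}\}$ is a transversal, i.e.\ that $p$ divides the order of $gH$ in $N_G(H)/H$; that hypothesis should be read into the statement. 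It is automatic in both applications in the paper, where $g$ is a nontrivial $p$-element not in $H$. Your instinct to seek an element of $\gen{g}$ outside $H$ whose $p$-part is also outside $H$ pinpoints exactly what is needed, but the structure of $\gen{g}$ alone does not guarantee it, so the hedge at the end of your write-up should be promoted to an explicit hypothesis rather than left as an implicit lean.
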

\begin{proof}
Decompose $\tr_H^G =
\tr_{H\gen{g}}^G\tr_{H\gen{g^p}}^{H\gen{g}}\tr_{H}^{H\gen{g^p}}$. For each $v
\in \tr_H^{H\gen{g^p}}(C_V(H))$, we have 
\[
\tr_{H\gen{g^p}}^{H\gen{g}}(v) = (1+g+\cdots + g^{p-1})v = (g-1)^{p-1}v = 0. 
\]
\end{proof}

In this paper, if $X$ and $Y$ are two subsets of a group $G$, we write $[X,Y]$ for the subgroup of $G$ generated by the set of commutators $[x,y] = xyx^{-1}y^{-1}$ with $x \in X$ and $y \in Y$.
Our iterated commutators are right-associated: set $[X,Y; 1] = [X,Y]$, and inductively $[X,Y; i] = [X, [X,Y; i-1]]$ for $i \geq 2$.
If $G$ has a left action on some module $V$, the notation $[X,V; i]$ should be interpreted in the semidirect product of $V$ by $G$, in which case $[X,V;i]$ is a subspace of $V$, and $[x,v; i] = (x-1)^iv$ for all $x \in G$ and $v \in V$. 

The techniques we have generally take advantage of situations in a finite
$p$-group in which some subgroup $G$ normalizes another subgroup $P$ and acts
with small nilpotence degree on it, usually action which is quadratic (or trivial):
$[G,G,P] =  1$. 
Then the following lemma of Miyamoto \cite[Lemma~2]{Miyamoto1981} provides a
bound on the nilpotence degree of the action of $G$ on $H^j(P,A)$ that is linear in $j$ when 
$A$ is finite abelian. 

\begin{lemma}
\label{L:miyamoto}
Let $P$ be a finite $p$-group, $A$ a finite abelian group with trivial
$P$-action, and $G$ a finite group acting on $P$ and $A$.  Assume
$h$ and $n$ are nonnegative integers such that $(g-1)^h$ acts as zero on each
$G$-composition factor of $P$, and such that $(g-1)^n$ acts as zero on each
$G$-composition factor of $A$. Then for all $j \geq 0$, $(g-1)^{(h-1)j+n}$ acts
as zero on each $G$-composition factor of $H^j(P,A)$. 
\end{lemma}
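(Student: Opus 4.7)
The plan is to argue by nested induction. The outer induction is on $|P|$: we assume the lemma for $G$-groups of strictly smaller order and all $G$-modules $A$, with the base case $|P| = 1$ being immediate. In the inductive step, if $P$ admits a proper nontrivial $G$-invariant normal subgroup $P_1$, use the $G$-equivariant Lyndon-Hochschild-Serre spectral sequence
\[
E_2^{p,q} = H^p(P/P_1, H^q(P_1, A)) \Longrightarrow H^{p+q}(P,A).
\]
The $G$-composition factors of $P_1$ and $P/P_1$ lie among those of $P$, so the hypothesis on $h$ transfers to both. By the outer inductive hypothesis applied to $(P_1, A)$, each composition factor of $H^q(P_1, A)$ is killed by $(g-1)^{(h-1)q+n}$. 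Using this as the new hypothesis on the coefficients, a second application of the outer inductive hypothesis to $(P/P_1, H^q(P_1, A))$ shows that each composition factor of $E_2^{p,q}$ is killed by $(g-1)^{(h-1)(p+q)+n} = (g-1)^{(h-1)j+n}$, which then transfers to the composition factors of $H^j(P,A)$.

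The remaining case is that $P = V$ is $G$-simple, hence elementary abelian. Here we argue by a secondary induction on the $G$-composition length $\ell_G(A)$. If $A$ has a proper nontrivial $G$-submodule $A'$, the $G$-equivariant long exact sequence in $H^*(V,-)$ associated with $0 \to A' \to A \to A/A' \to 0$ reduces matters to submodules and quotients of $A$, since the hypothesis on $n$ restricts to both. So we may assume $A$ is a simple $G$-module. If $A$ has order coprime to $p$, then $H^j(V,A) = 0$ for $j \geq 1$ and the case $j = 0$ is immediate. Otherwise $A$ is an $\FF_p G$-module, and the universal coefficient theorem (applied via a $G$-equivariant $\FF_p V$-free resolution of $\FF_p$) yields a $G$-equivariant isomorphism $H^j(V,A) \cong H^j(V, \FF_p) \otimes_{\FF_p} A$. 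The standard presentation realizes $H^j(V, \FF_p)$ as a $G$-subquotient of a direct sum of tensor powers $(V^*)^{\otimes k}$ with $k \leq j$: it equals $S^j V^*$ for $p = 2$, and $\bigoplus_{a+2b=j} \Lambda^a V^* \otimes S^b V^*$ for odd $p$, each summand a quotient of $(V^*)^{\otimes(a+b)}$ with $a+b \leq j$. Expanding $(g \otimes g) - 1 = (g-1)\otimes 1 + 1 \otimes (g-1) + (g-1) \otimes (g-1)$ gives the tensor-nilpotence bound: if $(g-1)^r M = 0$ and $(g-1)^s N = 0$, then $(g-1)^{r+s-1}(M \otimes N) = 0$. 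Since $(g-1)^h V^* = 0$ by duality, iterating yields $(g-1)^{k(h-1)+1}(V^*)^{\otimes k} = 0$, so $(g-1)^{j(h-1)+1}$ annihilates $H^j(V, \FF_p)$; combining with $(g-1)^n A = 0$ via the tensor rule gives $(g-1)^{(h-1)j+n} H^j(V,A) = 0$.

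The main obstacle is the final computation in the base case: the two reductions (via the LHS spectral sequence and the LES in cohomology) are routine bookkeeping, but pinning down $H^j(V, \FF_p)$ as a $G$-subquotient of tensor powers of $V^*$ of total degree at most $j$, and then tracking the tensor-nilpotence exponent carefully enough to produce exactly $(h-1)j + n$, is where the cohomological content of the lemma is concentrated.
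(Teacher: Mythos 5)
The paper does not prove this lemma; it is quoted from Miyamoto \cite[Lemma~2]{Miyamoto1981}, so I am assessing your argument on its own terms rather than against a proof in the text.

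Your overall strategy (outer induction on $|P|$ via the LHS spectral sequence, inner induction on $\ell_G(A)$, and a tensor-nilpotence estimate for $H^*(V,\FF_p)\otimes A$ in the elementary abelian base case) is sound, and the base-case computation is correct: the $G$-equivariant identification of $H^j(V,\FF_p)$ with $S^jV^*$ (for $p=2$) or $\bigoplus_{a+2b=j}\Lambda^aV^*\otimes S^bV^*$ (for $p$ odd) realizes it as a subquotient of $\bigoplus_{k\le j}(V^*)^{\otimes k}$, the bound $(g-1)^{r+s-1}(M\otimes N)=0$ from $(g-1)^rM=0=(g-1)^sN$ is right, and iterating gives exactly $(h-1)j+n$.

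The gap is in the inductive step. You apply the inductive hypothesis to the pair $\bigl(P/P_1,\, H^q(P_1,A)\bigr)$, but the lemma you are inducting on requires the $p$-group to act \emph{trivially} on the coefficient module, and there is no reason for $P/P_1$ to act trivially on $H^q(P_1,A)$ when $P_1$ is an arbitrary $G$-invariant normal subgroup of $P$. For example, with $P=\ZZ/p\wr\ZZ/p$ and $P_1$ the base subgroup $(\ZZ/p)^p$, the quotient $P/P_1\cong\ZZ/p$ permutes $H^1(P_1,\FF_p)\cong(\FF_p^p)^*$ nontrivially. So the inductive hypothesis is simply not applicable to the $E_2$-page as you have set things up. (If instead you intended to prove the lemma without the trivial-action hypothesis, so that the LHS step is in scope, then the base case breaks: $H^j(V,A)\cong H^j(V,\FF_p)\otimes_{\FF_p}A$ uses that $V$ acts trivially on $A$, and fails in general — e.g.\ $A=\FF_p[V]$ has $H^{j>0}(V,A)=0$ while the tensor product does not.)

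The fix is to constrain the choice of $P_1$: take $P_1$ to be a minimal nontrivial $G$-invariant subgroup of $Z(P)$ (this exists whenever $P\neq 1$, since $Z(P)$ is a nontrivial $G$-invariant normal subgroup). With $P_1\le Z(P)$, the group $P$ acts trivially both on $P_1$ (by centrality) and on $A$ (by hypothesis), hence trivially on the cochain complex computing $H^q(P_1,A)$, so $P/P_1$ does act trivially on $H^q(P_1,A)$ and the inductive hypothesis applies. This choice of $P_1$ is proper unless $P=Z(P)$ is itself $G$-simple, i.e.\ $P$ elementary abelian with $G$ acting irreducibly — exactly your base case. With that modification the argument goes through.
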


\subsection{Mackey functors for fusion systems}

The notation we use for fusion systems follows
\cite{AschbacherKessarOliver2011}. We apply morphisms from right to left. Let
$\F$ be a saturated fusion system on a finite $p$-group $S$. A subgroup $Q$ of
$S$ is \emph{fully $\F$-normalized} (respectively, \emph{fully
$\F$-centralized}) if $|N_S(Q)| \geq |N_S(Q')|$ (respectively, $|C_S(Q)| \geq
|C_S(Q')|$) for each conjugate $Q'$ of $Q$ in $\F$, i.e., for each subgroup of
the form $\phi(Q)$ with $\phi \in \Hom_\F(Q,S)$.  A subgroup $Q$ of $S$ is
$\F$-\emph{centric} if $C_S(Q') \leq Q'$, i.e. $C_S(Q') = Z(Q')$, for each
$\F$-conjugate $Q'$ of $Q$.  The symbol $\F^c$ denotes the set of $\F$-centric
subgroups, and also the full subcategory with the same objects. Note that a
subgroup of $S$ is $\F$-centric if and only if it is fully centralized and
contains its centralizer in $S$
\cite[Definition~I.3.1]{AschbacherKessarOliver2011}. By one of the axioms for
saturation, a fully normalized subgroup $P \leq S$ is also fully centralized
and fully automized: $\Aut_S(P)$ is a Sylow $p$-subgroup of $\Aut_\F(P)$
\cite[Proposition~I.2.5]{AschbacherKessarOliver2011}. 

For each pair of subgroups $P,Q \leq S$, $\Inn(Q)$ acts on $\Hom_\F(P,Q)$ by
left composition.  The orbit category $\O(\F)$ of $\F$ is the category with the
same objects as $\F$ and with morphism sets 
\[ 
\Hom_{\O(\F)}(P,Q) = \Inn(Q)\backslash\Hom_\F(P,Q),
\]
the orbits under this action. If $\X$ is a collection of subgroups of $S$ which
is closed under $\F$-conjugacy and also closed under passing to overgroups in
$S$, then we abuse notation by using $\X$ also for the full
subcategory of $\F$ with object set $\X$, and we write $\O(\X)$
for the corresponding orbit category.  Other than the full orbit
category itself, we will only need to work with the centric orbit category,
the case $\X = \F^c$. 

Since the morphisms in a fusion system model conjugation of $p$-subgroups in a
finite group, it is natural that there is a notion of Mackey functor for fusion
systems. We first want to recall from \cite[Section~2]{DiazPark2015} the
definition of a Mackey functor in this setting in the form that is most useful
later. Let $k$ be a commutative ring with identity.  Let $M = (M^*,M_*)$ be a
pair of functors from $\O(\F)$ to $k\mod$ with $M^*$ contravariant and $M_*$ covariant.  
Set $\r_{P}^Q = M^*([\iota_P^Q])$, $\t_P^Q = M_*([\iota_P^Q])$, and
$\iso([\phi]) = M_*([\phi])$ for each $P \leq Q \leq S$ and each isomorphism
$[\phi] \colon P \to \phi(P)$ in $\O(\F)$. Then $M$ is a Mackey functor for
$\F$ if the following conditions hold \cite[Definition~2.1,
Proposition~2.2]{DiazPark2015}. 
\begin{enumerate}
\item 
$M(P) \bydef M^*(P) = M_*(P)$ for each $P \leq S$,
\item (Isomorphism) $M_*([\phi]) \bydef \iso(\phi) =  M^*([\phi]^{-1})$ for
each isomorphism $[\phi]$ in $\O(\F)$, and
\item (Mackey formula) for each $P,Q \leq R \leq S$, 
\[
r_Q^R \circ t_P^R = \sum_{x \in [Q\backslash R/P]} t_{ Q \cap { }^xP}^Q \circ r_{Q \cap { }^xP}^{{ }^xP} \circ \iso(c_x|_P). 
\]
\end{enumerate}

A morphism $M \to N$ of Mackey functors is a family of $k$-module
homomorphisms $\eta_P\colon M(P) \to N(P)$ such that $\eta = (\eta_P)$ is both
a natural transformation from $M^* \to N^*$ and a natural transformation $M_*
\to N_*$ simultaneously. A subfunctor of $M$ is a subfunctor of $M^*$ which is
simultaneously a subfunctor of $M_*$, and quotient
functors are defined objectwise. 

\subsection{Simple Mackey functors}\label{simple mackey}

The simple objects in $\Mack_k(\F)$ are parametrized by pairs $(T,V)$, where
$T$ is a subgroup of $S$ taken up to $\F$-conjugacy, and where $V$ is a simple
(irreducible) $k\Out_\F(T)$-module taken up to isomorphism
\cite[Section~3]{DiazPark2015}.  When convenient we view $V$ as a
$k\Aut_\F(T)$-module via inflation.  The corresponding simple Mackey functor
$S_{T,V}$ has the property that $S_{T,V}(T) = V$ and $S_{T,V}(Q) = 0$ for all
subgroups $Q$ such that $T$ is not $\F$-conjugate to a subgroup of $Q$. 

Let $T \leq S$ and let $V$ be a simple $k\Out_\F(T)$-module. We give the
description of the functor $S_{T,V}$ on objects and isomorphisms in $\O(\F)$
from p.153 of \cite{DiazPark2015}, since this will be important for the proof
of Theorem~\ref{T:main}, but interestingly the effect of $S_{T,V}$ on
nonisomorphisms is not so important for our argument.  For that we refer the
interested reader to the description in \cite{DiazPark2015}.  Our treatment is
a little different from (but equivalent to) that in \cite{DiazPark2015}, since
we need to pay somewhat closer attention to precisely how $S_{T,V}(Q)$
decomposes as a direct sum of $k\Out_\F(Q)$-modules. 

The set $\Hom_\F(T,Q)$ is an $\Aut_\F(Q)$-$\Aut_\F(T)$ biset with action on
either side given by composition. The orbits $\Hom_\F(T,Q)/\Aut_\F(T)$ are in
correspondence with the set of subgroups of $Q$ that are $\F$-conjugate to $T$.
Likewise the double orbits $\Aut_\F(Q)\backslash \Hom_\F(T,Q)/\Aut_\F(T)$ are
in correspondence with the $\Aut_\F(Q)$-orbits of such subgroups.  Let 
\[
A_{T,Q} = [\Aut_\F(Q)\backslash \Hom_\F(T,Q)/\Aut_\F(T)]
\]
be a set of representatives for the double orbits.

For each $\alpha \in \Hom_\F(T,Q)$, we 
temporarily set $U = \alpha(T)$ and
denote (formally) by $\alpha \otimes V$
the $k\Aut_\F(U)$-module, isomorphic to $V$ as a $k$-module via $\alpha
\otimes v \mapsto v$, with action
\[
\phi \cdot (\alpha \otimes v) = \alpha \otimes \alpha^{-1}\phi\alpha v. 
\]
for each $\phi \in \Aut_\F(U)$ and $v \in V$.  Since $T$ acts trivially
on $V$, $U$ acts trivially on $\alpha \otimes V$. Set 
\begin{equation}
\label{Walpha}
W_{\alpha} = \tr_{U}^{N_Q(U)}(\alpha \otimes V),
\end{equation}
the image of the relative trace, 
where here
$N_Q(U)$ acts on the $\Out_\F(U)$-module $\alpha \otimes V$ through the composite
\[
N_Q(U) \twoheadrightarrow N_{\Inn(Q)}(U) \twoheadrightarrow \Aut_{N_Q(U)}(U) \to \Aut_\F(U)
\]
with $UC_Q(U)$ acting trivially.
Note $W_\alpha$ is a $k$-submodule of $\alpha \otimes V$. 
It has the structure of a $kN_{\Aut_\F(Q)}(U)$-module on which $N_Q(U)$ acts trivially. 

Write $U^Q$ for the $Q$-conjugacy class of $U$, and
$N_{\Aut_\F(Q)}(U^Q)$ for the stabilizer of this class in $\Aut_\F(Q)$.
Since $\Inn(Q)$ is a normal subgroup of $\Aut_\F(Q)$ that acts transitively on $U^Q$,
\[
N_{\Aut_\F(Q)}(U^Q) = N_{\Aut_\F(Q)}(U)\Inn(Q).
\]
By construction, the subgroup $N_{\Aut_\F(Q)}(U) \cap \Inn(Q) = N_{\Inn(Q)}(U)$ acts trivially on $W_\alpha$, 
and we may regard $W_\alpha$ as a module for $N_{\Aut_{\F}(Q)}(U^Q)$ via the composite
\[
N_{\Aut_\F(Q)}(U^Q) \twoheadrightarrow N_{\Aut_\F(Q)}(U^Q)/\Inn(Q) \cong N_{\Aut_\F(Q)}(U)/N_{\Inn(Q)}(U). 
\]

Set now
\begin{equation}
\label{STVQalpha}
S_{T,V}(Q)_{\alpha} =
W_\alpha\!\uparrow_{N_{\Aut_\F(Q)}(U^Q)}^{\Aut_\F(Q)} =
\bigoplus_{\phi} W_{\phi\alpha},
\end{equation}
where $\phi$ runs over a set of representatives for the left cosets of $N_{\Aut_\F(Q)}(U^Q)$ in $\Aut_\F(Q)$. 
Then $S_{T,V}(Q)_\alpha$ is a $k\Out_\F(Q)$-module, that is, $Q$ still acts trivially. 
The value of $S_{T,V}$ on the subgroup $Q$ is then
\begin{equation}
\label{STVQ}
S_{T,V}(Q) = \bigoplus_{\alpha \in A_{T,Q}} S_{T,V}(Q)_{\alpha},
\end{equation}
an $\Out_\F(Q)$-invariant direct sum decomposition. 

Now let $Q'$ be another subgroup of $S$ and $\beta \colon Q \to Q'$ an
isomorphism in $\F$. The bijection $\Hom(T,Q) \xra{\beta \circ -}
\Hom(T,Q')$ determines the $k$-module isomorphism $\alpha \otimes V \cong
\beta\alpha \otimes V$ intertwining the actions of $N_{\Aut_\F(Q)}(\alpha(T))$
and $N_{\Aut_\F(Q')}(\beta(\alpha(T)))$ with respect to conjugation by $\beta$.
It induces an isomorphism of $k$-modules 
\begin{equation}
\label{Wbetaalpha}
W_\alpha \cong W_{\beta \circ \alpha}
\end{equation}
intertwining the actions of $N_{\Aut_\F(Q)}(\alpha(T)^Q)$ and
$N_{\Aut_\F(Q')}(\beta(\alpha(T))^{Q'})$.  The component of $\iso(\beta)$ at
$\alpha$ is the corresponding map of induced modules
\[
\iso(\beta)_\alpha \colon S_{T,V}(Q)_\alpha \to S_{T,V}(Q')_{\beta\alpha}. 
\]
and $\iso(\beta)$ is the sum of these maps. 

Note that if $V$ is a simple $\FF_p\Out_\F(T)$-module, then $S_{T,V}$ takes values in
$\FF_p$\mod. Further, since $V$ is simple, $S_{T,V}$ is a simple functor
\cite{DiazPark2015}. 

\subsection{Higher limits of functors on orbit categories}

Let $k$ be a commutative $\ZZ_{(p)}$-algebra and let $M$ be a contravariant
functor from the orbit category of a group or a fusion system to the category
of $k$-modules. A common technique for computing the higher limits of $M$ (and
especially for showing that such higher limits vanish) is to use a filtration
of $M$ each of whose successive quotient functors is atomic, namely a functor
which vanishes except on a single conjugacy class of subgroups. This method
does not always work as, for example, in the case of the center functor
\cite{Oliver2018}. But the idea is often effective for making reductions even
when it doesn't work directly. And ultimately it is all that is needed for the
proof of the main theorem here. 

Let $G$ be a finite group and let $A$ be a $\ZZ_{(p)}G$-module. Define a functor
\[
F_{A} \colon \O_p(G)^{\op} \to \ZZ_{(p)}\mod
\]
via $F_{A}(1) = A$ and $F_{A}(P) = 0$ when $P \neq 1$. 
The action of $G = \Aut_{\O_p(G)}(1)$ on $A = F_{A}(1)$ is the given one.
The higher limits of $F_{A}$ are denoted $\Lambda^i(G,A)$ and arise as the higher limits of atomic functors, as was first shown by Jackowski, McClure, and Oliver \cite[Proposition~5.20]{AschbacherKessarOliver2011}. 

\begin{proposition}\label{P:lim-atomic}
Let $F$ be any functor on the orbit category of a fusion system $\F$ which vanishes except on the $\F$-conjugacy class of a subgroup $P$.  
Then there is an isomorphism $\lim^* F \cong \Lambda^*(\Out_\F(P),F(P))$.
\end{proposition}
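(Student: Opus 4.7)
My plan is to compute both sides using the normalized bar complex for higher limits over a small category and to exhibit an isomorphism of chain complexes between them. Recall that for any small category $\mathcal{C}$ and any contravariant functor $\Phi\colon \mathcal{C} \to \ZZ_{(p)}\mod$, the higher limits $\lim^* \Phi$ are computed by the complex
\[
C^n(\mathcal{C}; \Phi) = \prod_{\sigma} \Phi(\sigma(0)),
\]
where $\sigma$ ranges over composable chains $\sigma(0) \to \cdots \to \sigma(n)$ of $n$ non-identity morphisms in $\mathcal{C}$.

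Apply this first to $(\mathcal{C}, \Phi) = (\O(\F), F)$: since $F$ is supported on the $\F$-conjugacy class of $P$, only chains with $\sigma(0)$ in that class contribute. Fixing a fully normalized representative $P$ and absorbing the action of $\Out_\F(P) = \Aut_{\O(\F)}(P)$ on cochains reduces the complex to one indexed by $\Out_\F(P)$-orbits of chains $P \to Q_1 \to \cdots \to Q_n$ in $\O(\F)$ issuing from $P$, with coefficients in $F(P)$. Applying the same recipe to $(\mathcal{C}, \Phi) = (\O_p(\Out_\F(P)), F_{F(P)})$ yields, by the analogous collapse at the trivial subgroup, a complex indexed by $\Out_\F(P)$-orbits of chains $1 \to K_1 \to \cdots \to K_n$ of $p$-subgroups of $\Out_\F(P)$, again with coefficients in $F(P)$.

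The main obstacle, and the crux of the argument, is to establish a natural isomorphism between these two reduced complexes. The correspondence I would construct sends a chain $P \to Q_1 \to \cdots \to Q_n$ in $\O(\F)$ to the chain of $p$-subgroups of $\Out_\F(P)$ obtained by tracking, at each step, the image in $\Out_\F(P)$ of $N_{\Aut_\F(Q_i)}(P_i)$ (conjugated back to $P$), where $P_i \leq Q_i$ denotes the running image of $P$ along the chain. These images are $p$-subgroups of $\Out_\F(P)$; conversely every $p$-subgroup arises this way because, $P$ being fully $\F$-normalized, saturation guarantees that every $p$-subgroup of $\Aut_\F(P)$ is realized as $\Aut_R(P)$ for some overgroup $R$ of $P$ in $S$. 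The main technical point is a Frattini-type argument to pass cleanly between $\Aut_\F$ and $\Out_\F$ at each step, after which compatibility of the correspondence with the bar differentials and with the $\Out_\F(P)$-action follows by direct inspection.
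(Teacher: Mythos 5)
The paper does not actually prove this proposition; it is cited as a known result of Jackowski, McClure, and Oliver, via \cite[Proposition~5.20]{AschbacherKessarOliver2011}. The standard proof there does not proceed by exhibiting an isomorphism of cochain complexes; it uses a Grothendieck spectral sequence (or equivalently a cofinality argument through an auxiliary category built from morphisms out of $P$). So your approach is genuinely different, and the burden is on you to make the chain-level comparison precise. As written, it does not hold up.

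Two concrete problems. First, the recipe itself is wrong: $N_{\Aut_\F(Q_i)}(P_i)$ is typically not a $p$-group, since $\Aut_\F(Q_i)$ is not, so its image in $\Out_\F(P)$ need not be an object of $\O_p(\Out_\F(P))$ at all. What you presumably want is something like the image of $\Aut_{N_{Q_i}(P_i)}(P_i)$ (equivalently $N_{Q_i}(P_i)/C_{Q_i}(P_i)$) transported back to $P$; that at least lands in a $p$-subgroup of $\Out_\F(P)$ because $P$ is fully normalized. Second, and more seriously, the correspondence is not a bijection on nondegenerate chains, so it cannot be an isomorphism of normalized bar complexes. Consider a chain $P \to Q_1 \to \cdots \to Q_n$ in which $Q_1$ is $\F$-conjugate to $P$ and the first morphism is an isomorphism. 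This is a legitimate nondegenerate $n$-chain in $\O(\F)$, and it contributes nontrivially to the $d_0$ face map on the $\O(\F)$ side, since $F(P \to Q_1)$ is then an isomorphism $F(Q_1)\to F(P)$. But your associated $K_1$ (with the corrected recipe) is the image of $\Aut_{Q_1}(Q_1)=\Inn(Q_1)$, i.e.\ the trivial subgroup, so the resulting chain $1 \to K_1 \to \cdots$ in $\O_p(\Out_\F(P))$ begins with an identity morphism and is degenerate. Thus nondegenerate chains on the two sides do not match up, and compatibility with the $d_0$ differential fails; this is precisely the place where an identification of the higher limits needs extra input. Finally, the step where you ``absorb the action of $\Out_\F(P)$'' to pass to orbits of chains with coefficients $F(P)$ is not an innocent reduction when $p \mid |\Out_\F(P)|$, since invariants are then not exact; this also requires justification (it is exactly the content of the spectral-sequence/cofinality argument in the cited proof).
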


Let $P_1,\dots,P_n$ be a set of representatives for the $\F$-conjugacy classes
of subgroups of $S$ such that if $i < j$, then $P_j$ is not conjugate to a
subgroup of $P_i$. Then one can make a filtration $0 = M_0 \subset M_{1}
\subseteq \cdots \subset M_n = M$, in which $M_j$ is the functor equal to $M$
on the union of the conjugacy classes $P_i$ with $i \leq j$, and $0$ otherwise.
Then $(M_{i}/M_{i-1})(P) = M(P)$ if $P$ is conjugate to $P_i$, and it is zero
otherwise, i.e. the quotient is atomic.  

In general, we use the notation $M_Q$ for the atomic subquotient functor
of $M$ corresponding to the $\F$-conjugacy class of $Q$, namely the functor
with values $M_Q(P) = M(P)$ if $P$ is $\F$-conjugate to $Q$, and $M_Q(P) = 0$
otherwise.

The next lemma is proved using long exact sequences on higher limits
corresponding to short exact sequences of functors arising out of a filtration of the above type. 

\begin{lemma}[{\cite[Corollary~5.21(a)]{AschbacherKessarOliver2011}}]
\label{L:filterbyatomic}
Let $M$ be a contravariant functor on the centric orbit category of a fusion system $\F$. 
Assume that $M_Q$ is acyclic for all $Q \in \F^c$, i.e., $\Lambda^m(\Out_{\F}(Q),M(Q)) = 0$ for all $m \geq 1$.
Then $M$ is acyclic. 
\end{lemma}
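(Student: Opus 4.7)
The plan is to filter $M$ by a chain of subfunctors whose successive quotients are atomic, then combine the long exact sequences on higher limits with Proposition~\ref{P:lim-atomic}, exactly as sketched in the paragraph preceding the statement. First, order a set of representatives $P_1,\dots,P_n$ for the $\F$-conjugacy classes of subgroups of $S$ so that whenever $i \leq j$ one has $P_j$ not $\F$-subconjugate to $P_i$ (any linear extension of the reverse subconjugacy order works; e.g.\ arrange them by weakly decreasing order). Define $M_j \subseteq M$ objectwise by $M_j(Q) = M(Q)$ when $Q$ is $\F$-conjugate to some $P_i$ with $i \leq j$, and $M_j(Q) = 0$ otherwise. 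The chosen ordering is exactly what is needed to ensure $M_j$ is a (contravariant) subfunctor of $M$: a morphism $[\phi]\colon Q \to R$ in $\O(\F)$ forces $Q$ to be $\F$-subconjugate to $R$, so if $R$ lies in one of the first $j$ classes then so does $Q$, and the restriction map $M(R) \to M(Q)$ lands in $M_j(Q)$.

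The quotients $N_j \bydef M_j/M_{j-1}$ are atomic by construction: $N_j$ vanishes on subgroups outside the $\F$-conjugacy class of $P_j$, while $N_j(P_j) = M(P_j)$ carries its original $k\Out_\F(P_j)$-module structure. Proposition~\ref{P:lim-atomic} therefore yields
\[
{\lim}^i N_j \cong \Lambda^i(\Out_\F(P_j), M(P_j)),
\]
which vanishes for every $i \geq 1$ by hypothesis.

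To finish, induct on $j$ to prove $\lim^i M_j = 0$ for all $i \geq 1$. The base case $M_0 = 0$ is immediate. For the inductive step the short exact sequence $0 \to M_{j-1} \to M_j \to N_j \to 0$ produces the long exact sequence
\[
\cdots \to {\lim}^i M_{j-1} \to {\lim}^i M_j \to {\lim}^i N_j \to {\lim}^{i+1} M_{j-1} \to \cdots,
\]
in which both flanking groups vanish for $i \geq 1$ (by the inductive hypothesis and the previous paragraph, respectively), forcing $\lim^i M_j = 0$. Taking $j = n$ gives the claim since $M_n = M$. There is no substantive obstacle in this argument; the only point requiring attention is the choice of ordering on conjugacy class representatives, which must be made so that each $M_j$ is genuinely a subfunctor before the long exact sequence machinery can be applied.
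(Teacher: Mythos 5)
Your proof is correct and implements exactly the filtration argument sketched in the paragraph preceding the lemma in the paper (the paper itself cites \cite[Corollary~5.21(a)]{AschbacherKessarOliver2011} rather than reproving it): filter by conjugacy class, observe the quotients are atomic, apply Proposition~\ref{P:lim-atomic}, and run the long exact sequence. One small caution: the parenthetical ``any linear extension of the \emph{reverse} subconjugacy order works; e.g.\ arrange them by weakly \emph{decreasing} order'' points in the wrong direction and contradicts the (correct) condition you state just before it. The stated condition --- $P_j \leq_\F P_i$ forces $j \leq i$ --- is a linear extension of the subconjugacy order itself, i.e.\ representatives taken in weakly \emph{increasing} order of $|P_i|$, so that $P_1 = 1$ and $P_n = S$; a reader following the parenthetical literally would produce $M_j$ that are not subfunctors (e.g.\ $M_1$ supported only on $S$). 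With that slip corrected, the argument goes through as written.
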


The functors $\Lambda^*(G,M)$ vanish in many cases. 
See for example Section~III.5 of \cite{AschbacherKessarOliver2011} for many results along these lines.
In the next lemma we state two such vanishing results.  

Recall that a radical $p$-chain of length $m$ in the finite group $G$ is a
sequence $O_p(G) = P_0 < P_1 < \cdots P_m$ such that $P_i =
O_p(N_{G}(P_1,\dots,P_i))$ for each $i = 0, \dots, n$, where here
$N_G(P_1,\dots,P_i)$ denotes the intersection of the normalizers in $G$ of the
$P_i$. 

\begin{lemma}
\label{L:AKO5.27}
Let $G$ be a finite group, $M$ a $\ZZ_{(p)}G$-module, and $m \geq 1$.  
\begin{enumerate}[label=\textup{(\arabic*)}, ref=\arabic*]
\item \label{Lambda:O_p} If $O_p(G) \neq 1$, then $\Lambda^m(G,M) = 0$. 
\item \label{Lambda:radical} If $\tr_{1}^{N_G(P_1 ,\dots, P_m)}(M) = 0$ for
each radical $p$-chain $1 = P_0 < P_1 < \cdots < P_m$ of length $m$ in $G$,
then $\Lambda^m(G,M) = 0$.
\end{enumerate}
\end{lemma}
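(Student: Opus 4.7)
The plan is to compute $\Lambda^*(G,M)$ by means of an explicit chain complex built from chains of $p$-subgroups of $G$, from which both vanishing statements will fall out directly. By Proposition~\ref{P:lim-atomic} we have $\Lambda^m(G,M) = \lim^m_{\O_p(G)^{\op}} F_M$. Starting from a bar-type projective resolution of the constant functor on $\O_p(G)^{\op}$, tensoring with $F_M$, and using that $F_M$ vanishes outside $\{1\}$, one arrives at a cochain complex $(C^*,\delta)$ with $H^m(C^*) \cong \Lambda^m(G,M)$ whose $m$-th term has the form
\[
C^m \;=\; \bigoplus_{[\sigma]} N_\sigma,
\]
summed over $G$-conjugacy classes of strict chains $\sigma: 1 < P_1 < \cdots < P_m$ of nontrivial $p$-subgroups. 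A standard further reduction, obtained by collapsing non-radical chains via the replacement $P_i \mapsto O_p(N_G(P_1,\dots,P_i))$, shows that one may restrict the indexing set to radical chains without changing $H^*(C^*)$. The key technical point (and the main obstacle, see below) is that, at a radical chain $\sigma$, the summand $N_\sigma$ lies inside $\tr_1^{N_G(\sigma)}(M)$ rather than merely inside $M^{N_G(\sigma)}$.

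Granting the setup, part~(\ref{Lambda:radical}) is immediate: if $\tr_1^{N_G(\sigma)}(M) = 0$ for every radical $m$-chain $\sigma$, then every summand $N_\sigma$ of $C^m$ vanishes in the radical-chain complex, so $\Lambda^m(G,M) = H^m(C^*) = 0$.

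For part~(\ref{Lambda:O_p}), assume $Q = O_p(G) \neq 1$. Since $Q \trianglelefteq G$ we have $Q \leq N_G(\sigma)$ for every chain $\sigma$. One constructs a contracting chain homotopy $s : C^m \to C^{m-1}$ by prepending $Q$ to each chain and collapsing repeats in a consistent way; the identity $\delta s + s\delta = \mathrm{id}$ on $C^{\geq 1}$ follows because the inserted vertex $Q$ appears in a uniform position across chains. This is the combinatorial shadow of the $G$-equivariant contractibility of the order complex of nontrivial $p$-subgroups of $G$ when $O_p(G) \neq 1$, and it yields $\Lambda^m(G,M) = 0$ for all $m \geq 1$.

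The main obstacle is justifying the precise form of $C^*$ in the first paragraph, and in particular the assertion that the radical-chain summands $N_\sigma$ land in the image of the relative trace from $1$. This is the content of the normalizer-decomposition-type computation underlying the higher-limit functors $\Lambda^*(G,-)$; once carefully derived from the bar resolution and the identification of higher limits of atomic functors, both parts follow as sketched.
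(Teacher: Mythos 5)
The paper's proof of this lemma is simply a citation: part (1) is \cite[Proposition~III.5.24(b)]{AschbacherKessarOliver2011} and part (2) is a restatement of \cite[Proposition~III.5.27]{AschbacherKessarOliver2011}. Your proposal tries instead to give a self-contained proof via a chain complex indexed by chains of $p$-subgroups, which is indeed the right circle of ideas (this is how JMO, Grodal, and ultimately AKO approach $\Lambda^*$), but the sketch has a genuine gap that you yourself flag and do not close.

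The gap is the assertion that, after passing to radical chains, the summand $N_\sigma$ of $C^m$ lies inside $\tr_1^{N_G(\sigma)}(M)$. This is not the form the standard computing complex takes. Starting from a projective resolution of the constant functor on $\O_p(G)$ and evaluating against $F_M$, one obtains a complex whose $m$-th term is (a quotient or sub of) $\bigoplus_\sigma M^{N_G(\sigma)}$ --- \emph{fixed points} of the chain stabilizers --- with relative traces appearing in the \emph{differentials}, not in the terms. There is no a priori reason a fixed-point module $M^{N_G(\sigma)}$ should sit inside the trace image $\tr_1^{N_G(\sigma)}(M)$ (e.g. take $M$ a nontrivial permutation module over a $p$-group: the trace image can be zero while the fixed points are not). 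Turning the fixed-point description into one where the vanishing of $\tr_1^{N_G(\sigma)}(M)$ forces the $m$-th cohomology to vanish is exactly the nontrivial content of \cite[Prop.~III.5.27]{AschbacherKessarOliver2011}; it rests on the Steinberg-complex machinery (Webb's theorem that the reduced chain complex of the $p$-subgroup poset is chain-homotopy equivalent to a complex of projective $\ZZ_{(p)}G$-modules, together with Higman's criterion to convert projectivity into a trace condition). Likewise, the ``standard further reduction'' to radical chains is not a formality --- it is a theorem of Webb/Bouc/Grodal type, and restricting the indexing set requires a chain homotopy equivalence, not merely discarding summands. For part (1), the idea of a contracting homotopy by prepending $O_p(G)$ is the right instinct (Quillen's argument that the poset of nontrivial $p$-subgroups is $G$-contractible when $O_p(G)\neq 1$), but as stated (``collapsing repeats in a consistent way'') it is not a proof, and one must check it descends to whichever normalized / conjugacy-class-quotiented complex is actually computing $\Lambda^*$.

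In short: the proposal identifies the correct framework but treats as a black box the one step --- passing from a fixed-point-valued complex to a trace-valued vanishing criterion --- that \emph{is} the theorem being cited. As written this is a sketch with an acknowledged hole, not a proof; to fill it you would essentially reproduce the argument in \cite[\S III.5]{AschbacherKessarOliver2011} or in Grodal's ``Higher limits via subgroup complexes.''
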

\begin{proof}
For \eqref{Lambda:O_p}, see
\cite[Proposition~III.5.24(b)]{AschbacherKessarOliver2011}. Then
\eqref{Lambda:radical} is a restatement of
\cite[Proposition~III.5.27]{AschbacherKessarOliver2011}, given
\eqref{Lambda:O_p}. 
\end{proof}

We want to show (\cref{STVacyc}) that the restriction of the contravariant part of each Mackey composition factor $S_{T,V}$ of $H^j(-,\FF_p)$ is acyclic when $j \leq p-2$.
For doing this, D{\'i}az and Park show we can restrict attention to composition factors $S_{T,V}$ with $T$ not $\F$-centric.

\begin{lemma}
\label{L:DP43} 
Let $k$ be a field of characteristic $p$ and $\F$ a saturated fusion system on the finite $p$-group $S$. 
For each $\F$-centric subgroup $T$ and each simple $k\Out_\F(T)$-module $V$, the restriction 
${S_{T,V}}^*|_{\O(\F^c)}$ of the contravariant part of $S_{T,V}$ is acyclic. 
\end{lemma}
\begin{proof}
Proposition~3.3 of \cite{DiazPark2015} implies that when $T$ is centric, ${S_{T,V}}^*|_{\O(\F^c)}$ is an $\F^c$-restricted
Mackey functor for $\F$ in the sense of Definition~2.1 of \cite{DiazPark2015}. 
The lemma then follows from Theorem~A there. 
\end{proof}

\section{Proof of Theorem~\ref{T:main}}

Throughout this section we fix a prime $p$,
and a saturated fusion system $\F$ on the $p$-group $S$. 
We set $k = \FF_p$ for short. 
For fixed $j \geq 0$, we consider $H^j(-,k)$ as a Mackey functor on $\O(\F)$
where the contravariant structure is induced by restrictions and conjugations,
and where the covariant structure is induced by transfers and conjugations (as
usual). 

We first fix some additional notation that we keep for the remainder of the section. 

Let $\B$ be the collection of all normal subgroups $B$ of $S$ such that
\begin{eqnarray}
\label{E:B}
C_S(B) \leq B \quad \text{ and } \quad [B,B,S] \overset{\mathrm{def}}{=} [B,[B,S]] =  1. 
\end{eqnarray}
By \cite[5.3.12]{Gorenstein1980}, each subgroup $B \leq S$ maximal subject to
being normal and abelian coincides with its centralizer in $S$.  Since $[B,S]
\leq B$, we have $[B,B,S] \leq [B,B] = 1$. Thus, $\B$ is nonempty.
Further, since each member of $\B$ is normal in $S$, it is fully
normalized, hence fully centralized by one of the saturation axioms for $\F$.
This implies $\B \subseteq \F^c$.

\begin{definition}
\label{scriptQ}
Let $T$ be a subgroup of $S$. Define $\Q$ to be the set of pairs $(Q,\alpha)$
consisting of a centric subgroup $Q \in \F^c$ and a morphism $\alpha \in
\Hom_\F(T,Q)$ having the property that there are $B \in \B$ and an isomorphism
$\beta \colon Q \to Q'$ in $\F$ such that $B \cap \beta\alpha(T) < B \cap
\beta(Q)$.
\end{definition}

The proof of Theorem~\ref{T:main} is broken into two propositions. In the first
one, we show that the $k\Out_\F(Q)$-submodule $S_{T,V}(Q)_{\alpha}$ of
$S_{T,V}(Q)$ (see equation \eqref{STVQalpha}) is 0 whenever $S_{T,V}$ is a composition
factor of $H^j(-,\FF_p)$, $(Q,\alpha) \in \Q$, and $j \leq p-2$. In the second one,
we use this to show that the atomic subquotient $({S_{T,V}}^*)_Q$ is acyclic for an
arbitrary $\F$-centric subgroup $Q$ when $T$ is not $\F$-centric. 

\begin{proposition}
\label{P:MP=0} 
Fix a prime $p$, a saturated fusion system $\F$ on a finite $p$-group $S$, a
nonnegative integer $j$, a subgroup $T \leq S$, and a simple
$k\Out_\F(T)$-module $V$.
If $j \leq p-2$ and $S_{T,V}$ is a composition factor of $H^j(-,k)$, 
then $S_{T,V}(Q)_\alpha = 0$ for all $(Q,\alpha) \in \Q$.  
\end{proposition}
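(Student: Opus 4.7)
My approach is to show that $W_\alpha := \tr_{T'}^{N_Q(T')}(V') = 0$, where $T' := \alpha(T)$ and $V' := \alpha\otimes V$; since $S_{T,V}(Q)_\alpha$ is just the induction of $W_\alpha$ in \eqref{STVQalpha}, this suffices. The two key tools are Miyamoto's cohomological nilpotence bound (Lemma~\ref{L:miyamoto}) and the trace-vanishing criterion (Lemma~\ref{L:tracebasic}). As a preliminary reduction, the iso-equivariance in \eqref{Wbetaalpha} gives $W_\alpha \cong W_{\beta\alpha}$ for any $\F$-iso $\beta\colon Q\to Q'$, so I may replace $(Q,\alpha)$ by $(\beta(Q),\beta\alpha)$ and assume outright that $B\cap T' < B\cap Q$ for the fixed $B\in\B$ supplied by the definition of $\Q$.

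Next, I produce an element $g\in(B\cap Q)\cap N_Q(T')\setminus T'$. Since $B\triangleleft S$ and $T'\leq Q$, every element of $T'$ normalizes both $B$ and $Q$, and hence $B\cap Q$; so $T^* := T'(B\cap Q)$ is a subgroup of $Q$, and the inequality $B\cap T' < B\cap Q$ forces $T'<T^*$. Normalizer growth in the $p$-group $T^*$ then yields $T'<N_{T^*}(T')$, and writing any element of $N_{T^*}(T')\setminus T'$ in the form $tb$ with $t\in T'$ and $b\in B\cap Q$ shows that $b$ itself lies in $(B\cap Q)\cap N_Q(T')\setminus T'$, as required.

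With $g$ in hand, since $g\in B$ and $T'\leq S$, the quadratic-action hypothesis gives $[T',g,g]\leq[S,B,B]=1$, so $(c_g-1)^2$ acts as zero on every $\Aut_\F(T')$-chief factor of $T'$. Combined with the trivial action on $k=\FF_p$, Lemma~\ref{L:miyamoto} (with $h=2$, $n=1$) gives that $(c_g-1)^{j+1}$ acts as zero on every $k\Aut_\F(T')$-composition factor of $H^j(T',k)$. The assumption that $S_{T,V}$ is a Mackey composition factor of $H^j(-,k)$ identifies $V$ with a $k\Out_\F(T)$-composition factor of $H^j(T,k)$; transporting via $\alpha$ identifies $V'$ with a $k\Out_\F(T')$-composition factor of $H^j(T',k)$, and hence a $k\Aut_\F(T')$-composition factor by inflation. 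Therefore $(c_g-1)^{j+1}V'=0$, and for $j\leq p-2$ this upgrades to $(c_g-1)^{p-1}V'=0$. Lemma~\ref{L:tracebasic} applied with $G=N_Q(T')$ and $H=T'$ then concludes $W_\alpha=0$, and thus $S_{T,V}(Q)_\alpha = 0$.

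I expect the main obstacle to be the production of $g$ in the second paragraph: a generic element of $(B\cap Q)\setminus T'$ need not normalize $T'$, and extracting a genuine normalizing element requires the detour through $T^* = T'(B\cap Q)$ together with the observation that $T'$ normalizes $B\cap Q$. Once $g$ is isolated, the rest is a clean combination of Miyamoto's estimate and the Cartan-style trace vanishing, driven in both places by the quadratic-action consequence $[S,B,B]=1$ of $B\in\B$.
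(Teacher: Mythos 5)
Your proof is correct and follows essentially the same route as the paper's. Both reduce via \eqref{Wbetaalpha} to the case $B\cap\alpha(T)<B\cap Q$, both isolate an element of $B\cap Q$ that normalizes $\alpha(T)$ without lying in it (the paper finds this by Dedekind's argument producing $N_{B\cap Q}(\alpha(T))\not\leq\alpha(T)$, which is the same group your normalizer-growth step lands in), and both then combine Lemma~\ref{L:miyamoto} (with $h=2$, $n=1$, driven by $[S,B,B]=1$) with Lemma~\ref{L:tracebasic} to kill $W_\alpha$; the only cosmetic difference is that the paper factors $\tr_U^{N_Q(U)}$ through the intermediate subgroup $UN_{B\cap Q}(U)$ whereas you apply the trace lemma directly with $G=N_Q(\alpha(T))$, which the lemma equally permits.
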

\begin{proof}
Set $U = \alpha(T)$ and $W_\alpha =
\tr_U^{N_Q(U)} (\alpha \otimes V)$.  
By \eqref{STVQalpha}, $S_{T,V}(Q)_\alpha$ takes the form
\[
S_{T,V}(Q)_\alpha = W_\alpha\!\uparrow_{N_{\Aut_\F(Q)}(U^Q)}^{\Aut_\F(Q)}. 
\]
That is $S_{T,V}(Q)_{\alpha}$ is induced from $W_\alpha$. 

Let $(Q,\alpha) \in \Q$. 
By \cref{scriptQ} there is an $\F$-isomorphism $\beta\colon Q \to Q'$ and
$B \in \B$ such that for $U' = \beta(U)$, we have $B \cap U' < B \cap Q'$.  The
map $\beta$ induces an intertwining $W_\alpha \cong W_{\beta \circ \alpha}$.
Because of this we may as well change to lighter notation by replacing $Q$ by
$Q'$, $U$ by $U'$, and $\alpha$ by $\beta \circ \alpha$. Thus, $B \cap U < B
\cap Q$ and we want to show $W_\alpha = 0$.

Set $B_Q = B \cap Q$ for short. 
Since $B$ is normal in $S$, $B_Q$ is normal in $Q$ and $N_{B_Q}(U)$ is normal in $N_Q(U)$.
Also, as $B \cap U < B_Q$, we have $U < U B_Q$.  
Hence $U < N_{UB_Q}(U) = UN_{B_Q}(U)$. 
Since $S_{T,V}$ is a composition factor of $H^j(-,k)$, 
    $V$ is a $k\Out_\F(T)$-composition factor of $H^j(T,k)$, 
    and $\alpha \otimes V$ is an $\Out_\F(U)$-composition factor of $H^j(U,k)$. 
We have $[N_{B_Q}(U), N_{B_Q}(U), U] \leq [B,B,S] = 1$; 
in particular $N_{B_Q}(U)$ acts quadratically 
    on every $\Aut_\F(U)$-composition factor of $U$.
The hypotheses of \cref{L:miyamoto} thus hold with $h = 2$ and $n = 1$, and with $U$, $k$, and $\Aut_\F(U)$
    in the roles of $P$, $A$, and $G$. 
As $\alpha \otimes V$ is a composition factor of $H^j(U,k)$, by that lemma we have 
$(b-1)^{j+1}(\alpha \otimes V) = 0$ 
for all $b \in N_{B_Q}(U)$.
Since composition factors are always $\FF_p$-vector spaces and $j+1 < p$, \cref{L:tracebasic} implies $\tr_U^{UN_{B_Q}(U)}(\alpha \otimes V) = 0$.
Hence
\[ 
W_\alpha = \tr_{U}^{N_{Q}(U)}(\alpha \otimes V) 
         = \tr_{UN_{B_Q}(U)}^{N_{Q}(U)}( \tr_{U}^{UN_{B_Q}(U)}(\alpha \otimes V))
= 0, 
\] 
and this completes the proof.
\end{proof}

\begin{proposition}
\label{P:pruning} 
Fix a prime $p$, a saturated fusion system $\F$ on a finite $p$-group $S$,
and a nonnegative integer $j$.  
Let $S_{T,V}$ be a composition factor of $H^j(-,k)$ such that $T$ is not $\F$-centric. 
If $j \leq p-2$, then for all $Q \in \F^c$, the atomic subquotient functor $({S_{T,V}}^*)_Q$ is acyclic. 
\end{proposition}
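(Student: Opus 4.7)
My plan is to strengthen the conclusion and prove that $S_{T,V}(Q) = 0$ for every $Q \in \F^c$, from which the acyclicity of $(S_{T,V})_Q$ is immediate. Using the $\Out_\F(Q)$-invariant decomposition $S_{T,V}(Q) = \bigoplus_{\alpha \in A_{T,Q}} S_{T,V}(Q)_\alpha$ of \eqref{STVQ}, \cref{P:MP=0} already eliminates every summand with $(Q,\alpha) \in \Q$, and the task reduces to proving $S_{T,V}(Q)_\alpha = 0$ for each $\alpha \in A_{T,Q}$ with $(Q,\alpha) \notin \Q$. As an induced module, $S_{T,V}(Q)_\alpha$ vanishes exactly when $W_\alpha = \tr_U^{N_Q(U)}({}^\alpha V) = 0$, where $U = \alpha(T)$.

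For such an $\alpha$, $U < Q$ is strict since $T$ (hence $U$) is not $\F$-centric while $Q$ is, and specialising the negation of the defining condition of $\Q$ at $\beta = \id_Q$ yields $B \cap Q \leq U$ for every $B \in \B$. Imitating the argument in \cref{P:MP=0}, it would suffice to produce $g \in N_Q(U) \setminus U$ acting quadratically on $U$: then \cref{L:miyamoto} with $h = 2$ and $n = 1$ gives $(g-1)^{j+1} = 0$ on every composition factor of $H^j(U,k)$, hence on ${}^\alpha V$, and since $j + 1 \le p - 1$, \cref{L:tracebasic} forces $W_\alpha = 0$. The previous source of $g$, an element of $N_{B \cap Q}(U) \setminus U$ for some $B \in \B$, is blocked here precisely because $B \cap Q \leq U$.

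The plan to produce $g$ is to leverage that $U$ is not $\F$-centric: after replacing $U$ by a fully $\F$-centralised conjugate $U^*$, one obtains $x \in C_S(U^*) \setminus U^*$. The main obstacle is transporting $x$ back to an element of $N_Q(U) \setminus U$ (acting trivially, hence quadratically, on $U$); I would attempt this through the extension axiom for saturated fusion systems, using the constraint $B \cap Q \leq U$ to keep the extension inside $Q$. Should direct vanishing of $S_{T,V}(Q)_\alpha$ prove elusive for some $\alpha$, the fall-back plan is to abandon summand-wise vanishing and apply \cref{L:AKO5.27} directly: invoke part~(\ref{Lambda:O_p}) whenever $O_p(\Out_\F(Q)) \neq 1$, and otherwise analyse radical $p$-chains via part~(\ref{Lambda:radical}), in each case constructing quadratic actions via $\B$ to kill the relevant traces on $S_{T,V}(Q)_\alpha$.
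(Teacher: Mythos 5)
Your primary plan---to strengthen the conclusion to $S_{T,V}(Q) = 0$ for every centric $Q$---is not the paper's route, and I believe it fails.  The acyclicity proved in the paper is genuinely weaker than objectwise vanishing, and the source of the quadratic element is in the wrong place in your argument.  Concretely: for $(Q,\alpha)\notin\Q$ you obtain $B\cap Q \le U$ for all $B\in\B$, and you then want to find $g\in N_Q(U)\setminus U$ acting quadratically on $U$.  There is no reason such a $g$ exists.  The extension axiom lets you extend an isomorphism $U\to U^*$ to some overgroup of $U$ determined by $N_\phi$, but it gives no control that forces the resulting overgroup (or the transported centralizing element) into $Q$; it is entirely possible that $C_Q(U)=Z(U)$ even when $U$ is not $\F$-centric, and that no element of $N_Q(U)\setminus U$ acts quadratically.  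Because of this, your strengthened claim likely fails outright and cannot be the basis of the proof.

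The key idea you are missing is that the quadratic element should live \emph{outside} $Q$, not inside it, and the trace to be killed is the trace for $\Out_\F(Q)$ (feeding into Lemma~\ref{L:AKO5.27}\eqref{Lambda:radical}), not the trace $\tr_U^{N_Q(U)}$.  After applying Proposition~\ref{P:MP=0} one has $B\cap\phi(U) = B\cap Q$ for all $\phi\in\Aut_\F(Q)$.  This is exactly where the hypothesis that $T$ is not $\F$-centric enters: since $B\cap U$ is then not $\F$-centric whereas $B$ is, one gets $B\cap Q < B$, so $B\not\le Q$ and $\Out_B(Q)$ is nontrivial.  Choosing a minimal normal subgroup $C$ of $N_S(Q)$ with $B\cap Q < C \le N_B(Q)$, the image $\ol C = QC/Q$ is a central order-$p$ subgroup of $\Out_S(Q)$ which normalizes each radical $p$-chain; since $C\le B$, $[\phi(U),C,C]=1$, so Lemma~\ref{L:miyamoto} gives $(g-1)^{j+1}=0$ on each $W_{\phi\alpha}$ for a generator $g$ of $\ol C$, and Lemma~\ref{L:tracebasic} kills $\tr_1^{N_G(Q_1,\dots,Q_m)}(S_{T,V}(Q)_\alpha)$.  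Your fallback gestures at this (invoking Lemma~\ref{L:AKO5.27}(\ref{Lambda:O_p}) and (\ref{Lambda:radical})) and is in the right direction, but it omits the essential deduction that $B\not\le Q$, which is the bridge between ``$T$ is not $\F$-centric'' and the existence of a usable element of $\Out_\F(Q)$, so it does not yet constitute a proof.
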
 
\begin{proof}
Fix $Q \in \F^c$. The functor $({S_{T,V}}^*)_Q$ does not depend on $Q$,
but only on the $\F$-conjugacy class of $Q$. By Proposition~\ref{P:lim-atomic},
we have
\[
{\lim}^*_{\O(\F^c)} ({S_{T,V}}^*)_Q \cong \Lambda^*(\Out_\F(Q'), S_{T,V}(Q')),
\]
for any $\F$-conjugate $Q'$ of $Q$. So we may assume $Q$ to be fully normalized
in $\F$. In particular, $R := \Out_S(Q)$ is a Sylow $p$-subgroup of $G :=
\Out_\F(Q)$.  By \cref{L:AKO5.27}\eqref{Lambda:O_p}, the proposition holds if
$O_p(G) \neq 1$, so we are reduced to $O_{p}(G) = 1$.

Adopt the notation of \cref{simple mackey}.  By \eqref{STVQ} and additivity of
the functors $\Lambda^*(G,-)$, we have
\begin{equation}
\label{Lambdadecomp}
\Lambda^*(G,S_{T,V}(Q)) = \bigoplus_{\alpha \in A_{T,Q}} \Lambda^*(G,S_{T,V}(Q)_\alpha). 
\end{equation}
Fix arbitrary $B \in \B$ and $\alpha \in A_{T,Q}$. Set $B_Q = B \cap Q$ and $U
= \alpha(T)$ for short, and let $X = \bigcap_{\phi \in \Aut_\F(Q)} \phi(U)$.

Assume that $S_{T,V}(Q)_\alpha$ is nonzero.  By \cref{P:MP=0} and
\eqref{Wbetaalpha},
\[
B \cap \phi(U) = B_Q
\]
for every choice of $\phi \in \Aut_\F(Q)$. In particular, $B_Q \leq X \leq B \cap U$.

Since $T$ is not $\F$-centric, $U$ and $B_Q$ are not $\F$-centric, and $B_Q < B$. 
Thus $Q < QB$, and so $Q < N_{QB}(Q) = QN_B(Q)$. 
As $B$ is normal in $S$, $N_B(Q)$ is normal in $N_S(Q)$. 
Let $C$ be a normal subgroup of $N_S(Q)$ minimal subject to $B_Q < C \leq N_B(Q)$. 
Since $C$ normalizes $Q$ and $B$ is normal in $S$, we have $[C,Q] \leq B_Q$, and hence
\[
[C,\phi(U)] \leq [C,Q] \leq B_Q \leq C \cap X \leq C \cap \phi(U)
\]
for each $\phi \in \Aut_\F(Q)$. That is, 
\begin{equation}
\label{C and phi(U)}
C \leq N_S(\phi(U)) \quad \text{ and } \quad  \phi(U) \leq N_S(C)
\end{equation}
for each $\phi \in \Aut_\F(Q)$.

Fix a radical $p$-chain $1 = Q_0 < Q_1 < \cdots < Q_m$ of $G$ with $m \geq 1$,
and let $H = N_G(Q_1,\dots,Q_m)$ be its normalizer. Conjugating in $G$ in
order to take $Q_m \leq R$, we will show that the hypotheses of
\cref{L:AKO5.27}\eqref{Lambda:radical} hold for the conjugate chain, and then
conjugating back, it holds for the one just fixed. In this way we are reduced
to $Q_m \leq R$.  

Let $\phi \in \Aut_\F(Q)$ be arbitrary.  
Recall that $C$ was chosen so that $C/B_Q$ is a minimal normal subgroup of $N_S(Q)/B_Q$ (contained in $N_{B}(Q)/B_Q$.
As a minimal normal subgroup of a finite $p$-group, $\ol{C} := QC/Q \cong C/B_Q$ is therefore of order $p$ and contained in the center of $R = \Out_S(Q) =  N_S(Q)/Q$, the last equality because $Q \in \F^c$.
Thus, $\ol{C} \leq H$.  
Since $C \leq B$, we have $[C,\phi(U)] \leq \phi(U) \cap B$ by \eqref{C and phi(U)}, and so $[C,C,\phi(U)] \leq [C,B] = 1$ as $B$ is abelian.
In particular,
$[C,C,V_1] = 0$ for every $\Aut_{\F}(\phi(U))$-composition factor $V_1$ of $\phi(U)$.  
As $\phi\alpha \otimes V$ is an $\Aut_\F(\phi\alpha(U))$-composition factor of $H^j(\phi\alpha(T), k)$,
we have 
\[
\text{$(c-1)^{j+1}(\phi\alpha \otimes V) = 0$ for all $c \in C$} 
\]
by \cref{L:miyamoto} applied with $\Aut_{\F}(\phi(U))$, $\phi(U)$, $k$, $1$, and $2$ in the roles of $G$, $P$, $A$, $n$ and $h$. 
So $(c-1)^{j+1}W_{\phi\alpha} = 0$ for all $c \in C$
as $W_{\phi\alpha}$ is a $k$-submodule of $\phi\alpha \otimes V$. 
Since this holds for all $\phi \in \Aut_\F(Q)$, we have
by the direct sum decomposition \eqref{STVQalpha} that
\[
\text{$(c-1)^{j+1}S_{T,V}(Q)_{\alpha} = 0$ for all $c \in C$}. 
\]
\cref{L:tracebasic} now applies with $H$ in the role of $G$, with $1$ in the role
of $H$, and with $g$ a generator of $\ol{C}$. As $j+1 < p$, we have
$\tr_1^H(S_{T,V}(Q)_{\alpha}) = 0$ by that lemma. Therefore,
\cref{L:AKO5.27}\eqref{Lambda:radical} and \eqref{Lambdadecomp} combine to give
$\lim^m_{\O(\F^c)}({S_{T,V}}^*)_Q \cong \Lambda^m(\Out_\F(Q),S_{T,V}(Q)) = 0$. 
\end{proof}

\begin{proof}[Proof of \cref{STVacyc}]
Let $S_{T,V}$ be a composition factor of $H^j(-,k)$ as a Mackey functor on $\O(\F)$, and suppose that $j \leq p-2$.
If $T$ is centric, then ${S_{T,V}}^*|_{\O(\F^c)}$ is acyclic by \cref{L:DP43}. 
If $T$ is not $\F$-centric, then \cref{P:pruning} shows that the atomic functor $({S_{T,V}}^*)_Q$ is acyclic for each $Q \in \F^c$, so again ${S_{T,V}}^*|_{\O(\F^c)}$ is acyclic by \cref{L:filterbyatomic}.
\end{proof}

\begin{proof}[Proof of \cref{T:main}]
As made explicit in the proof of \cite[Proposition~4.3]{DiazPark2015}, 
given a filtration of $H^j(-,\FF_p)$ whose successive quotients are simple Mackey functors for $\F$,
the restrictions to $\O(\F^c)$ of the contravariant parts of the members of the filtration 
yield a filtration for $H^j(-,\FF_p)|_{\O(\F^c)}$.
So the theorem follows from \cref{STVacyc}. 
\end{proof}

\bibliographystyle{amsalpha}{ }
\bibliography{limh1-oddp.bbl}
\end{document}